\newtheorem{theorem}{Theorem}[section]
\newtheorem{definition}[theorem]{Definition}
\newtheorem{lemma}[theorem]{Lemma}
\newtheorem{remark}[theorem]{Remark}
\numberwithin{equation}{section}
\title{\bf Nonsymmetric extension of the Green-Osher inequality
\footnote{The author is supported by the Doctoral Scientific Research Foundation of
Liaoning Province (No.20170520382)
and the Fundamental Research
Funds for the Central Universities (No.3132017046).}}
\author{
\bf Yunlong Yang\\
College of Science,\ \ Dalian Maritime University,\\
Dalian, 116026, P. R. China \\email: ylyang@dlmu.edu.cn\\
}
\date{}
\begin{document}
\maketitle \noindent

{\noindent \bf Abstract} \quad In this paper we
obtain the extended Green-Osher inequality
when two smooth, planar strictly convex bodies are at a
dilation position and show the necessary and sufficient
condition for the case of equality.
\\\\
{\bf Mathematics\ Subject\ Classification 2010:}\ 52A40, \ 52A10
\\
{\bf Key words:}\ \ dilation position, \ Green-Osher's inequality,
\ nonsymmetric, 
\ relative Steiner polynomial

\section{Introduction}\label{sec1}

We denote by $\mathbb{R}^n$ the usual $n$-dimensional Euclidean space with
the canonical inner product $\langle\cdot,\cdot\rangle$. A compact convex
set $K$ in $\mathbb{R}^n$ is called a {\it convex body} if it contains the
origin and has nonempty interior. When $n = 2$, it is called a {\it planar convex body}.
The volume of a set $S \subseteq\mathbb{R}^n$ is denoted by $V(S)$.
The {\it Minkowski sum} of convex bodies $K$ and $L$, and
the {\it Minkowski scalar product} of $K$ for positive real number $t$ are,
respectively, defined by
\begin{equation*}
    K+L=\{x+y\mid x\in K,y\in L\}
\end{equation*}
and
\begin{equation*}
    t K=\{t x\mid x\in K\}.
\end{equation*}
For two planar convex bodies $K$ and $L$, the volume of the Minkowski sum $K + tL$
gives the {\it relative Steiner polynomial} of $K$ with respect to $L$:
\begin{equation}\label{eqn1.1}
    V(K+tL)=V(K)+2V(K,L)t+V(L)t^2,
\end{equation}
where $V(K,L)$ is the {\it mixed area} of $K$ and $L$. Formula \eqref{eqn1.1} is
closely related to the classical isoperimetric inequality, the Brunn-Minkowski inequality
and the log-Brunn-Minkowski inequality.
Many proofs, sharpened forms and generalization
of the isoperimetric inequality can be found in Chavel \cite{C2001},
Dergiades \cite{D2002}, Osserman \cite{O1978}
and Schneider \cite{S2014}.

Using remarkable symmetrization, Gage \cite{Gage1983} successfully obtained an inequality for the
total squared curvature for convex curves. Following his work, for a planar strictly convex body $K$
and a symmetric, planar strictly convex body $E$, Green and Osher \cite{G-O1999} (see also \cite{Y-Z2016})
obtained a generalized formula:
\begin{equation}\label{eqn1.2}
\frac{1}{ V(E)}\int_0^{2\pi} F(\rho(\theta))h_E(\theta)(h_E(\theta)+h_E''(\theta))\mathrm{d}\theta \ge F(-t_1)+F(-t_2),
\end{equation}
where $\rho(\theta)$ is the relative curvature radius of $K$ with respect to $E$,
$F(x)$ is a strictly convex function on $(0,+\infty)$,
$t_1$ and $t_2$ are the two roots of the relative Steiner polynomial of $K$ with
respect to $E$. Inequality \eqref{eqn1.2} plays a significant role in studying the curve
shortening flow (see Gage \cite{Gage1984,G1993} and Gage-Hamilton \cite{G-H1986}).

A natural question is whether the Green-Osher inequality holds
without symmetric condition. Similar question is asked by the
log-Brunn-Minkowski inequality (see B\"{o}r\"{o}czky-Lutwak-Yang-Zhang \cite{B-L-Y-Z2012},
Xi-Leng \cite{X-L2016} and Yang-Zhang \cite{Y-Z2018}).
Xi and Leng \cite{X-L2016} gave the definition of dilation position for the first time
to prove the log-Brunn-Minkowski inequality and solve the planar Dar's conjecture.

Let $K$ and $L$ be two convex bodies.
Convex bodies $K$ and $L$ are at a {\it dilation position},
if the origin $o\in K\cap L$ and
\begin{equation}\label{eqn7.0}
    r(K,L)L\subseteq K\subseteq R(K,L)L.
\end{equation}
Here $r(K,L)$ and $R(K,L)$ are the {\it inradius}
and {\it outradius} of $K$ with respect to $L$, i.e.,
\begin{align*}
    &r(K,L)=\max\{t>0\mid x+tL\subseteq K~\text{and}~ x\in \mathbb{R}^n\},\\
    &R(K,L)=\max\{t>0\mid x+tL\supseteq K~ \text{and}~ x\in \mathbb{R}^n\}
\end{align*}
Noticing that there is a common center
when $K$ and $L$ are at a dilation position, then
the ratio of the support functions of $K$ and $L$
belongs to the range from $r(K,L)$ to $R(K,L)$,
which leads to the Green-Osher inequality holds
without symmetric condition. Properties of convex
bodies are at a dilation position can be found
in Lemma \ref{lem7.2} (see also Xi-Leng \cite{X-L2016}).

In this paper, inspired by the impressive work in \cite{X-L2016},
we obtain the main result.

\begin{theorem}\label{thm7.5}
Let $K, L$ be two smooth, planar strictly convex bodies and
$\rho(\theta)$ the relative curvature radius of $K$ with respect to $L$.
If $K$ and $L$ are at a dilation position and $F(x)$ is a strictly convex
function on $(0,+\infty)$, then
\begin{equation}\label{eqn7.5}
    \frac{1}{V(L)}\int_{0}^{2\pi} F(\rho(\theta))h_L(\theta)
    (h_L(\theta)+h_L''(\theta))\mathrm{d}\theta\ge
    F(-t_1)+F(-t_2),
\end{equation}
where $t_1$ and $t_2$ are the two roots of the relative
Steiner polynomial of $K$ with respect to $L$, and
the equality in \eqref{eqn7.5} holds if and only
if $K$ and $L$ are homothetic.
\end{theorem}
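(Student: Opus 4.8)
The plan is to reinterpret \eqref{eqn7.5} as a convex-order domination between two probability measures and then reduce it, through the integral representation of convex functions, to a one-parameter family of elementary inequalities, only one regime of which is nontrivial. Write $\kappa_K=h_K+h_K''$ and $\kappa_L=h_L+h_L''$, so that $\rho=\kappa_K/\kappa_L$, and set $\mathrm{d}\mu=\frac{h_L\kappa_L}{2V(L)}\,\mathrm{d}\theta$. Since $V(L)=\tfrac12\int_0^{2\pi}h_L\kappa_L\,\mathrm{d}\theta$, $\mu$ is a probability measure and \eqref{eqn7.5} reads $\int F(\rho)\,\mathrm{d}\mu\ge\tfrac12(F(-t_1)+F(-t_2))$. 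I would then record two moments: $\int\mathrm{d}\mu=1$, and, using the symmetry $\int_0^{2\pi}\kappa_K h_L\,\mathrm{d}\theta=\int_0^{2\pi}h_K\kappa_L\,\mathrm{d}\theta=2V(K,L)$, also $\int\rho\,\mathrm{d}\mu=V(K,L)/V(L)$. By Vieta applied to \eqref{eqn1.1}, $-t_1-t_2=2V(K,L)/V(L)$ and $t_1t_2=V(K)/V(L)$, so $\int\rho\,\mathrm{d}\mu=\tfrac12((-t_1)+(-t_2))$: the $\mu$-mean of $\rho$ equals the mean of the target two-point distribution. Thus \eqref{eqn7.5} asserts exactly that the law of $\rho$ under $\mu$ dominates $\tfrac12\delta_{-t_1}+\tfrac12\delta_{-t_2}$ in the convex order. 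Note that Jensen gives only $\int F(\rho)\,\mathrm{d}\mu\ge F(\tfrac12((-t_1)+(-t_2)))$, which is weaker, so genuine second-order information is needed.

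Next I would reduce to a single family of test functions. Any strictly convex $F$ on $(0,\infty)$ is an affine function plus a nonnegative superposition of the functions $(x-c)_+$ and $(c-x)_+$, $c>0$; since $(c-x)_+$ differs from $(x-c)_+$ by an affine term and affine $F$ give equality in \eqref{eqn7.5} by the mean identity, it suffices to prove
\begin{equation*}
\int_0^{2\pi}(\rho-c)_+\,\mathrm{d}\mu\ \ge\ \tfrac12\big((-t_1-c)_++(-t_2-c)_+\big),\qquad c>0.
\end{equation*}
Ordering the roots as $t_1\le t_2<0$, this is trivial for $c\ge -t_1$, and for $c\le -t_2$ it follows from $(\rho-c)_+\ge\rho-c$ because there the right-hand side equals $\int\rho\,\mathrm{d}\mu-c$. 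All the content is concentrated in the window $c\in(-t_2,-t_1)$.

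In that window, multiplying by $2V(L)$ and using $(\rho-c)\kappa_L=\kappa_K-c\kappa_L$, the inequality becomes $\int_{\{\rho>c\}}(\kappa_K-c\kappa_L)h_L\,\mathrm{d}\theta\ge V(L)(-t_1-c)$. Writing the integral over $\{\rho>c\}$ as the full integral, which equals $2(V(K,L)-cV(L))=V(L)((-t_1-c)+(-t_2-c))$, minus the (negative) integral over $\{\rho<c\}$, this collapses to the single geometric estimate
\begin{equation*}
\int_{\{\rho<c\}}(c\kappa_L-\kappa_K)\,h_L\,\mathrm{d}\theta\ \ge\ V(L)\,(c+t_2),\qquad c\in(-t_2,-t_1).
\end{equation*}
This estimate is the heart of the matter and the step I expect to be the main obstacle: it is a second-order statement that fails for general pairs of bodies and must use that $K$ and $L$ are at a dilation position. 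Here I would invoke Lemma \ref{lem7.2}: the common center and the inclusions $r(K,L)L\subseteq K\subseteq R(K,L)L$ give $r(K,L)\le h_K/h_L\le R(K,L)$, and it is precisely this control that has to be fed into the comparison of the set $\{\rho<c\}$ with the area $V(L)$, playing for a nonsymmetric $L$ the role that central symmetry of $E$ plays in the Green--Osher inequality \eqref{eqn1.2}. I would model this final estimate on that symmetric argument.

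For the equality case the reduction is traced backwards. Equality in \eqref{eqn7.5} with $F$ strictly convex propagates, through the representation, to equality in the geometric estimate for almost every $c$ in the window. If $K$ and $L$ were not homothetic, then $\rho$ would be nonconstant and $t_1\neq t_2$, so the window would be nonempty and the estimate strict there, a contradiction. Hence $K$ and $L$ are homothetic; conversely $K=\lambda L$ gives $\rho\equiv\lambda$, and both sides of \eqref{eqn7.5} equal $2F(\lambda)$.
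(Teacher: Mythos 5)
Your reductions are correct and genuinely different in style from the paper's: passing to the probability measure $\mathrm{d}\mu=\frac{h_L(h_L+h_L'')}{2V(L)}\,\mathrm{d}\theta$, checking via Vieta that the $\mu$-mean of $\rho$ matches the mean of $\tfrac12\delta_{-t_1}+\tfrac12\delta_{-t_2}$, reducing the convex-order statement to hinge functions $(x-c)_+$, and observing that only the window between $-t_2$ and $-t_1$ carries content. But the proposal has a genuine gap exactly where you flag it: the window estimate
\begin{equation*}
\int_{\{\theta\,\mid\,\rho(\theta)<c\}}\bigl(c\,(h_L+h_L'')-(h_K+h_K'')\bigr)h_L\,\mathrm{d}\theta\ \ge\ V(L)\,(c+t_2)
\end{equation*}
is asserted, not proved; you only say you ``would invoke Lemma \ref{lem7.2}'' and ``would model this final estimate on the symmetric argument.'' This cannot be left to the reader: every step you actually carry out (Vieta, symmetry of the mixed area, the two trivial regimes of $c$) is valid for an \emph{arbitrary} pair of smooth strictly convex bodies, for which the theorem is false. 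The dilation-position hypothesis enters only through this one estimate, so the entire geometric content of the theorem is missing. Moreover, even granting the pointwise control $r(K,L)\le h_K/h_L\le R(K,L)$, you never connect $r$ and $R$ to the roots $t_1,t_2$; the paper does this via \cite[Lemma 4.1]{B-L-Y-Z2012} together with the Minkowski inequality, which yields $|h_K-\frac{V(K,L)}{V(L)}h_L|\le\frac{\delta}{V(L)}h_L$ with $\delta=\sqrt{V(K,L)^2-V(K)V(L)}$. The equality case inherits the same gap, since it requires the strict version of the window estimate when $K$ and $L$ are not homothetic (the paper's Lemma \ref{lem7.5}).

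For the record, the paper closes this hole by a different decomposition: it splits $S^1$ into two sets $I_1,I_2$ of equal $\mu$-measure (Definition \ref{def7.1}), applies Jensen on each to get $F(\rho_1)+F(\rho_2)$ with $\rho_1+\rho_2=2V(K,L)/V(L)$, and then Lemma \ref{lem7.4} shows $(\rho_1,\rho_2)$ majorizes $(-t_1,-t_2)$; the proof of that lemma combines the bound $|h_K-\frac{V(K,L)}{V(L)}h_L|\le\frac{\delta}{V(L)}h_L$ with the exact identity $\int_0^{2\pi}\bigl(h_K-\frac{V(K,L)}{V(L)}h_L\bigr)(\rho-a)(h_L+h_L'')\,\mathrm{d}\theta=-2\delta^2/V(L)$. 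Your gap is in fact fillable by the same trick, since that identity holds with any constant $c$ in place of $a$: together with the pointwise bound it gives $\int_0^{2\pi}|\rho-c|\,h_L(h_L+h_L'')\,\mathrm{d}\theta\ge 2\delta$, and averaging this with the mean identity $\int_0^{2\pi}(\rho-c)\,h_L(h_L+h_L'')\,\mathrm{d}\theta=2V(K,L)-2cV(L)$ yields precisely your hinge inequality in the window. Had you included such an argument (and its strict variant for the equality case), your route would be a clean alternative to the paper's; as written, it stops short of the theorem's actual content.
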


This paper is organized as follows. In Section \ref{sec2},
we give some basic facts about planar convex bodies.
In Section \ref{sec3}, we get the extended Green-Osher inequality
when two smooth, planar strictly convex bodies are at a
dilation position.

\section{Preliminaries}\label{sec2}

Let $K$ be a planar convex body.
A line $l$ is called a {\it support line} of $K$ if it passes through
at least one boundary point of $K$ and if the entire planar convex body
$K$ lies on one side of $l$.
Let $l(\theta)$ be the support line of $K$
in the direction $\mathbf{u}(\theta)=(\cos\theta, \sin\theta)$,
where $\theta$ is the oriented angle from the positive
$x$-axis to the perpendicular line of $l(\theta)$.
The {\it support function} of $K$ is defined by
$$h_K(\theta)= \sup_{x\in K} \langle x, \mathbf{u}(\theta)\rangle, \quad \mathbf{u}(\theta)\in S^1.$$
It is easy to see that $h_K(\theta)$ is the signed distance
of the support line $l(\theta)$ of $K$ with
exterior normal vector $\mathbf{u}(\theta)$ from
the origin. Clearly, $h_K$, as a function of $\theta$,
is single-valued and $2\pi$-periodic.

If $h_K(\theta)$ and $h_L(\theta)$
are continuously differentiable, then
\begin{equation*}
    V(K,L)=\frac{1}{2}\int_{0}^{2\pi}
    (h_K(\theta)h_L(\theta)-h_K'(\theta)h_L'(\theta))
    \mathrm{d}\theta.
\end{equation*}
Furthermore, if $h_K(\theta)$ and $h_L(\theta)$ are smooth,
then
\begin{equation*}
    V(K,L)=\frac{1}{2}\int_{0}^{2\pi}
    h_K(\theta)(h_L(\theta)+h_L''(\theta))
    \mathrm{d}\theta=\frac{1}{2}\int_{0}^{2\pi}
    h_L(\theta)(h_K(\theta)+h_K''(\theta))
    \mathrm{d}\theta.
\end{equation*}

From the Minkowski inequality, 
it follows that
the expression $V(K+tL) = 0$ has two negative real roots.
Denote by $t_1$ and $t_2$ ($t_1\ge t_2$) the two roots of
the relative Steiner polynomial of $K$ with respect
to $L$, that is,
\begin{equation*}
    t_1= -\frac{V(K,L)}{V(L)}+\frac{\delta}{V(L)},\quad
    t_2= -\frac{V(K,L)}{V(L)}-\frac{\delta}{V(L)},\quad
    \delta=\sqrt{V(K,L)^2-V(K)V(L)}.
\end{equation*}

In order to prove the extended Green-Osher inequality,
we have the following definition that is
similar to the Definition 3.3 of \cite{G-O1999}.

\begin{definition}[\cite{G-O1999}]\label{def7.1}
Let $K,\,L$ be two smooth, planar strictly convex bodies. Consider
\begin{equation*}
    \sup \left\{\int_I \rho(\theta)h_L(\theta)(h_L(\theta)+h_L''(\theta))
    \mathrm{d}\theta \mid I\subseteq S^1,
    \int_I h_L(\theta)(h_L(\theta)+h_L''(\theta))
    \mathrm{d}\theta=V(L)\right\}.
\end{equation*}
Let $I_1$ denote the smallest subset of $S^1$ with measure $V(L)$ and realizing
the above supremum, and let $I_2$ be its complement. Then, there exists an $a\in \mathbb{R}^+$
such that
\begin{equation*}
    I_1\subseteq \{\theta\mid \rho(\theta)\ge a\},\quad
    I_2\subseteq \{\theta\mid \rho(\theta)\le a\}.
\end{equation*}
\end{definition}
Set
\begin{equation*}
    \rho_i=\frac{1}{V(L)}\int_{I_i} \rho(\theta)h_L(\theta)(h_L(\theta)+h_L''(\theta)) \mathrm{d}\theta,\quad i=1,2,
\end{equation*}
which yield that
\begin{equation*}
    \rho_1+\rho_2=\frac{2V(K,L)}{V(L)} \quad\text{and}\quad \rho_1\ge \rho_2,
\end{equation*}
and there is a real number $b\ge 0$ such that
\begin{equation*}
    \rho_1= \frac{V(K,L)}{V(L)}+ b \quad\text{and}\quad  \rho_2=\frac{V(K,L)}{V(L)}-b.
\end{equation*}

\section{Nonsymmetric extension of the Green-Osher inequality}\label{sec3}

In order to prove the main result, we first give four lemmas, in which
Lemma \ref{lem7.2} shows that convex bodies are at a dilation position
by appropriate translations and the location of ``dilation position'' (detailed proof can be
found in \cite[Lemma 2.1]{X-L2016}), Lemmas \ref{lem7.3} and \ref{lem7.4} are
used to prove inequality \eqref{eqn7.5},
and Lemma \ref{lem7.5} is used to deal with its equality case.

\begin{lemma}[\cite{X-L2016}]\label{lem7.2}
Let $K,L$ be two convex bodies in $\mathbb{R}^n$. 
\begin{itemize}
  \item [(i)] There are a translate of $L$, say $\bar{L}$,
  and a translate of $K$, say $\bar{K}$, so
  that $\bar{K}$ and $\bar{L}$ are at a dilation position.
  \item [(ii)] If $K$ and $L$ are at a dilation position,
  then the origin $o\in \mathrm{int}(K\cap L)\cup(\partial K\cap \partial L)$.
\end{itemize}
\end{lemma}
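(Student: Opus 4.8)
The two parts are of different character, so I would prove them separately, deriving (ii) directly from the definition and establishing (i) by a similitude–center construction. For (ii), assume $K,L$ are at a dilation position, so $o\in K\cap L$ and $rL\subseteq K\subseteq RL$ with $r=r(K,L)$, $R=R(K,L)$ and all dilations taken about the origin $o$. Distinguish two cases. If $o\in\mathrm{int}(L)$, then since $x\mapsto rx$ is a homeomorphism fixing $o$ and preserving interiors, $o\in\mathrm{int}(rL)\subseteq\mathrm{int}(K)$, and together with $o\in\mathrm{int}(L)$ this gives $o\in\mathrm{int}(K\cap L)$. If $o\in\partial L$, then $o\in\partial(RL)$; since $K\subseteq RL$ and $o\in K$, the point $o$ cannot be interior to $K$ (otherwise a neighborhood of $o$ would lie in $RL$, contradicting $o\in\partial(RL)$), so $o\in\partial K\cap\partial L$. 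As $o\in L$ forces one of these cases, (ii) follows.

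For part (i), recall that $r=r(K,L)$ and $R=R(K,L)$ are translation invariant and, by compactness, are realized by genuine homothets: there are an inscribed homothet $H_{\mathrm{in}}=x_1+rL\subseteq K$ and a circumscribed homothet $H_{\mathrm{out}}=x_2+RL\supseteq K$, so in particular $H_{\mathrm{in}}\subseteq H_{\mathrm{out}}$. If $r=R$, then $K$ is a translate of $rL$ and the claim is trivial, so assume $r<R$. Since $H_{\mathrm{in}}$ and $H_{\mathrm{out}}$ are positively homothetic copies of $L$ with ratio $R/r>1$, there is a unique point $o=(rx_2-Rx_1)/(r-R)$ with $H_{\mathrm{out}}=o+\tfrac{R}{r}(H_{\mathrm{in}}-o)$. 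I would then translate the entire configuration so that this $o$ is the origin; writing $\bar L$ for the translate of $L$ with $r\bar L=H_{\mathrm{in}}$, the homothety relation automatically gives $R\bar L=H_{\mathrm{out}}$, whence $r\bar L\subseteq\bar K\subseteq R\bar L$.

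What remains, and what I expect to be the main obstacle, is to verify that the common center $o$ actually lies in $\bar K\cap\bar L$: the sandwiching inclusions by themselves do not locate $o$. The decisive fact is that the similitude center of two nested, positively homothetic convex bodies of ratio larger than $1$ must lie in the smaller body. With $o$ taken as the origin, $H_{\mathrm{out}}=\tfrac{R}{r}H_{\mathrm{in}}$ gives $h_{H_{\mathrm{out}}}(u)=\tfrac{R}{r}\,h_{H_{\mathrm{in}}}(u)$ for every $u\in S^1$. If $o\notin H_{\mathrm{in}}$, a separating direction $u$ would satisfy $\langle 0,u\rangle>h_{H_{\mathrm{in}}}(u)$, that is $h_{H_{\mathrm{in}}}(u)<0$, and then $h_{H_{\mathrm{out}}}(u)=\tfrac{R}{r}h_{H_{\mathrm{in}}}(u)<h_{H_{\mathrm{in}}}(u)$, contradicting $H_{\mathrm{in}}\subseteq H_{\mathrm{out}}$. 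Hence $o\in H_{\mathrm{in}}=r\bar L\subseteq\bar K$, and since $o$ is the origin, $o\in r\bar L$ also yields $o\in\bar L$. Therefore $o\in\bar K\cap\bar L$, the inclusions $r\bar L\subseteq\bar K\subseteq R\bar L$ hold, and $\bar K,\bar L$ are at a dilation position, which proves (i).
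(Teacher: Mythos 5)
Your proof is correct, but note the situation: the paper itself gives no proof of Lemma \ref{lem7.2} at all --- it is quoted from Xi--Leng and the reader is referred to \cite[Lemma 2.1]{X-L2016} for the details --- so there is nothing internal to the paper to compare against, and your argument serves as a self-contained substitute for that reference. Both halves of your argument are sound. For (ii), the dichotomy $o\in\mathrm{int}(L)$ versus $o\in\partial L$ is exhaustive because the definition of dilation position already gives $o\in L$, and your two implications ($o\in\mathrm{int}(L)\Rightarrow o\in\mathrm{int}(rL)\subseteq\mathrm{int}(K)$, and $o\in\partial L\Rightarrow o\in\partial(RL)\Rightarrow o\notin\mathrm{int}(K)$) are exactly right; note that both use the fact that the dilations in \eqref{eqn7.0} are taken about the origin, so the origin is a fixed point. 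For (i), you correctly identify the crux: after placing the origin at the similitude center of the extremal homothets $H_{\mathrm{in}}=x_1+rL$ and $H_{\mathrm{out}}=x_2+RL$, the inclusions $r\bar{L}\subseteq\bar{K}\subseteq R\bar{L}$ are automatic (using translation invariance of $r(\cdot,\cdot)$ and $R(\cdot,\cdot)$), and the only nontrivial point is that the center lies in $H_{\mathrm{in}}$; your support-function argument ($h_{H_{\mathrm{in}}}(u)<0$ would force $h_{H_{\mathrm{out}}}(u)<h_{H_{\mathrm{in}}}(u)$, contradicting nestedness) settles this cleanly, and an equivalent route is to iterate $\frac{r}{R}H_{\mathrm{in}}\subseteq H_{\mathrm{in}}$ and let the contraction converge to the center. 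Two cosmetic points: since the lemma is stated in $\mathbb{R}^n$, the separating directions should range over $S^{n-1}$, not $S^1$; and in the degenerate case $r=R$ the claim that $K$ is a translate of $rL$ deserves the one-line justification that nested translates of a compact set must coincide (if $v+M\subseteq M$ with $M$ compact, then $nv+M\subseteq M$ for all $n$, forcing $v=0$).
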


\begin{lemma}\label{lem7.3}
Let $K, L$ be two smooth, planar strictly convex bodies.
If $K$ and $L$ are at a dilation position, then
the origin $o\in \mathrm{int}(K\cap L)$ or $o$ is the point
of tangency of $\partial K$ and $\partial L$ such that $K\subseteq L$
(or $L\subseteq K$).
\end{lemma}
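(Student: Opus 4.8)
The plan is to prove Lemma \ref{lem7.3} by refining the conclusion of Lemma \ref{lem7.2}(ii) using the additional smoothness and strict convexity hypotheses. By Lemma \ref{lem7.2}(ii), since $K$ and $L$ are at a dilation position, we already know that $o\in \mathrm{int}(K\cap L)\cup(\partial K\cap\partial L)$. The first case, $o\in\mathrm{int}(K\cap L)$, is exactly the first alternative in the statement, so nothing remains to be done there. Thus the whole content of the lemma is to analyze the second case, $o\in\partial K\cap\partial L$, and to show that under smoothness and strict convexity this forces $o$ to be a point of tangency of the two boundaries with one body contained in the other.

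So suppose $o\in\partial K\cap\partial L$. First I would use smoothness: at a smooth boundary point each convex body has a unique support line, hence a unique outer unit normal. The key step is to show that $\partial K$ and $\partial L$ share the same support line at $o$, i.e., they are genuinely tangent rather than crossing transversally. Here is where the dilation position \eqref{eqn7.0} does the real work. The inclusions $r(K,L)L\subseteq K\subseteq R(K,L)L$, translated to support functions, give $r(K,L)\,h_L(\theta)\le h_K(\theta)\le R(K,L)\,h_L(\theta)$ for all $\theta$. Because $o\in\partial K\cap\partial L$, there is a common direction $\theta_0$ (the outer normal at $o$) in which both support functions vanish: $h_K(\theta_0)=h_L(\theta_0)=0$. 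Since both bodies are smooth and strictly convex, the boundary point with outer normal $\theta_0$ is unique, and the sandwich inequality shows the two support lines in direction $\theta_0$ coincide and pass through $o$. This gives the tangency.

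Next I would establish the containment $K\subseteq L$ or $L\subseteq K$. The natural mechanism is again the support-function comparison, but now combined with the fact that the origin lies on both boundaries. The idea is that a common boundary point forces one of the two dilation factors to its extremal value: if $o\in\partial K$ and $r(K,L)L\subseteq K$, then in the direction $\theta_0$ we cannot have $r(K,L)>0$ pushing a translate of $L$ strictly inside, so one checks that either $r(K,L)=1$ or $R(K,L)=1$ must hold (for the centered copies), which by \eqref{eqn7.0} collapses one of the inclusions to $L\subseteq K$ or $K\subseteq L$ respectively. I would make this precise by arguing that $\min_\theta h_K(\theta)/h_L(\theta)$ and $\max_\theta h_K(\theta)/h_L(\theta)$ are the inradius and outradius, and that the shared vanishing at $\theta_0$ pins one of these ratios so that the quotient $h_K/h_L$ lies entirely on one side of $1$.

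I expect the main obstacle to be the containment conclusion rather than the tangency. Tangency follows cleanly from uniqueness of support lines at smooth points. The subtle point is ruling out the ``crossing'' configuration in which $o$ is a common boundary point but neither body contains the other near $o$; strict convexity is what excludes a shared boundary arc and guarantees $o$ is an isolated common boundary point, while the dilation inequalities \eqref{eqn7.0} are what globally force the nesting. Care is needed to handle the sign of the support functions near $\theta_0$ (where they vanish) and to confirm that the extremal ratio is attained precisely because the origin sits on both boundaries, so I would keep track of the geometric meaning of $r(K,L)$ and $R(K,L)$ via the support-function quotient throughout.
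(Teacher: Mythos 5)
Your reduction via Lemma \ref{lem7.2}(ii) and your handling of the interior case are exactly the paper's. Your tangency step is also correct, and in fact more honest than the paper's one\-line treatment: take $\theta_0$ to be the outer normal direction of $K$ at $o$, so $h_K(\theta_0)=0$; since $o\in K\cap L$ both support functions are nonnegative, so \eqref{eqn7.0} gives $r(K,L)h_L(\theta_0)\le h_K(\theta_0)=0$, hence $h_L(\theta_0)=0$; and since $o\in L$ attains $\langle o,u(\theta_0)\rangle=0=h_L(\theta_0)$, the vector $u(\theta_0)$ is an outer normal of $L$ at $o$, unique by smoothness. Thus the two support lines at $o$ coincide.

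The containment step, however, is a genuine gap, and it cannot be repaired, because that clause of the statement is false as a global assertion. Your key claim --- that the shared zero at $\theta_0$ forces $r(K,L)=1$ or $R(K,L)=1$, i.e.\ pins the quotient $h_K/h_L$ to one side of $1$ --- fails on the following example. Let $h_L(\theta)=1+\sin\theta$ (the unit disk tangent to the $x$-axis at $o$) and $h_K(\theta)=(1-\epsilon\cos 2\theta)(1+\sin\theta)$ with $\epsilon>0$ small, so that $K$ is smooth and strictly convex and $o\in\partial K\cap\partial L$ with common support line the $x$-axis. Both centered inclusions hold, since $h_K-(1-\epsilon)h_L=\epsilon(1-\cos 2\theta)h_L\ge 0$ and $(1+\epsilon)h_L-h_K=\epsilon(1+\cos 2\theta)h_L\ge 0$; moreover these factors are optimal even among translates: if $x_1+t_1L\subseteq K$, adding the support inequalities $\langle x_1,u(\theta)\rangle+t_1h_L(\theta)\le h_K(\theta)$ at the antipodal pair $\theta=0,\pi$ gives $2t_1\le h_K(0)+h_K(\pi)=2(1-\epsilon)$, and if $K\subseteq x_1+t_1L$, adding the reverse inequalities at $\theta=\pi/2,\,3\pi/2$ gives $2t_1\ge h_K(\pi/2)+h_K(3\pi/2)=2(1+\epsilon)$. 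Hence $r(K,L)=1-\epsilon$, $R(K,L)=1+\epsilon$, and $K,L$ are at a dilation position and tangent at $o$, yet $h_K/h_L=1-\epsilon\cos2\theta$ takes values on both sides of $1$, so neither $K\subseteq L$ nor $L\subseteq K$. Note that the paper's own proof of this clause is no better: it rests on the dichotomy ``tangent with containment, or else $o$ is a point of intersection contradicting \eqref{eqn7.0}'', which overlooks precisely this configuration (tangent at $o$, boundaries crossing elsewhere). What is actually needed, and used, in Lemmas \ref{lem7.4} and \ref{lem7.5} is only the tangency at $o$ together with the bound $r(K,L)\le h_K(\theta)/h_L(\theta)\le R(K,L)$ wherever $h_L(\theta)>0$; that much follows directly from \eqref{eqn7.0} and your tangency argument, so if you replace the global nesting claim by this (or by local nesting near $o$, governed by the curvature radii at $\theta_0$), your proof goes through.
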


\begin{proof}
By Lemma \ref{lem7.2}(ii), the origin
$o\in \mathrm{int}(K\cap L)\cup(\partial K\cap \partial L)$.
If the origin $o\in \mathrm{int}(K\cap L)$,
we are done. If the origin $o\in\partial K\cap \partial L$,
then $o$ must be the point of tangency of $\partial K$
and $\partial L$ such that $K\subseteq L$
(or $L\subseteq K$). Otherwise, $o$ is the point
of intersection of $\partial K$ and $\partial L$,
which contradicts to \eqref{eqn7.0}.
\end{proof}

\begin{lemma}\label{lem7.4}
Let $K, L$ be two smooth, planar strictly convex bodies.
If $K$ and $L$ are at a dilation position, then
\begin{equation}\label{eqn7.1}
    \rho_1\ge -t_2.
\end{equation}
\end{lemma}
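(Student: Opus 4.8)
The plan is to avoid computing $\rho_1$ explicitly and instead to read off $\rho_1\ge -t_2$ from the vanishing of the relative Steiner polynomial at $t_2$, reinterpreting $-t_2$ as a weighted average of $\rho$. Write $u=-t_2>0$, $\sigma(\theta)=h_K(\theta)/h_L(\theta)$, $\rho(\theta)=(h_K+h_K'')/(h_L+h_L'')$, and use the measure $\mathrm{d}\mu=h_L(h_L+h_L'')\,\mathrm{d}\theta$, which has total mass $2V(L)$. Since $h_K+t_2h_L=(\sigma-u)h_L$ and $(h_K+h_K'')+t_2(h_L+h_L'')=(\rho-u)(h_L+h_L'')$, the mixed-area formula together with $V(K+t_2L)=0$ gives
\begin{equation*}
0=V(K+t_2L)=\frac12\int_0^{2\pi}(h_K+t_2h_L)\big((h_K+h_K'')+t_2(h_L+h_L'')\big)\,\mathrm{d}\theta=\frac12\int_0^{2\pi}(\sigma-u)(\rho-u)\,\mathrm{d}\mu .
\end{equation*}
Setting $w=u-\sigma$, this says precisely that $-t_2$ is the $w$-weighted average of $\rho$, namely $u=\big(\int_0^{2\pi}\rho\,w\,\mathrm{d}\mu\big)\big/\big(\int_0^{2\pi}w\,\mathrm{d}\mu\big)$.

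Next I would control the weight $w$. Because $K$ and $L$ are at a dilation position, the common centering discussed in Section \ref{sec1} gives $r(K,L)\le\sigma\le R(K,L)$ pointwise. Combined with the classical relative Bonnesen-type inequalities $-t_1\le r(K,L)$ and $R(K,L)\le -t_2=u$ (see \cite{O1978,S2014}), this yields $0\le w=u-\sigma\le u-r(K,L)=:w_{\max}$. A direct computation gives the total mass $\int_0^{2\pi}w\,\mathrm{d}\mu=2uV(L)-2V(K,L)=2\delta$, while $-t_1\le r(K,L)$ forces $w_{\max}=-t_2-r(K,L)\le -t_2+t_1=2\delta/V(L)$. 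Hence the quantity $m:=\big(\int_0^{2\pi}w\,\mathrm{d}\mu\big)\big/w_{\max}=2\delta/w_{\max}$ satisfies $m\ge V(L)$.

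The heart of the argument is then a concentration (rearrangement) step. Among all weights with $0\le w\le w_{\max}$ and fixed total mass $w_{\max}\,m$, the value $\int_0^{2\pi}\rho\,w\,\mathrm{d}\mu$ is largest when $w$ is replaced by $w_{\max}\mathbf{1}_{I^{*}}$, where $I^{*}=\{\rho\ge c\}$ is the super-level set of $\rho$ of $\mu$-measure $m$; one checks $\int_0^{2\pi}\rho(w_{\max}\mathbf{1}_{I^{*}}-w)\,\mathrm{d}\mu\ge0$ by splitting at the level $c$. This gives
\begin{equation*}
-t_2=u=\frac{\int_0^{2\pi}\rho\,w\,\mathrm{d}\mu}{\int_0^{2\pi}w\,\mathrm{d}\mu}\le\frac{1}{m}\int_{I^{*}}\rho\,\mathrm{d}\mu .
\end{equation*}
Finally, since $m\ge V(L)$ the sets $I^{*}\supseteq I_1$ are nested super-level sets of $\rho$, and the extra part $I^{*}\setminus I_1$ carries values $\le a\le\rho_1$; hence the average of $\rho$ over the larger set $I^{*}$ does not exceed its average over $I_1$, which by Definition \ref{def7.1} equals $\rho_1$. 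Chaining the two estimates yields $-t_2\le\rho_1$, which is \eqref{eqn7.1}.

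The hard part will be the honest justification of the two Bonnesen-type inequalities $-t_1\le r(K,L)$ and $R(K,L)\le -t_2$: it is these, and not the dilation position alone, that pin the total weight at $2\delta$ and spread it over a set of measure at least $V(L)$; without them the concentration estimate would land on a set of measure $<V(L)$ and give only $-t_2\le\max\rho$ rather than the sharper $-t_2\le\rho_1$. A secondary technical point is making the rearrangement and the nested super-level-set comparison rigorous through Definition \ref{def7.1} (handling ties in $\rho$ and the null level $\{\rho=c\}$), together with the degenerate case $\sigma\equiv u$, which corresponds exactly to $K$ and $L$ homothetic and to equality throughout.
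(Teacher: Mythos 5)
Your proposal is correct, but it takes a genuinely different route from the paper's. Both arguments consume exactly the same external inputs: the pointwise bounds $r(K,L)h_L\le h_K\le R(K,L)h_L$ coming from the dilation position, and the relative Bonnesen-type inequalities $-t_1\le r(K,L)\le R(K,L)\le -t_2$, which the paper obtains from \cite[Lemma 4.1]{B-L-Y-Z2012} together with the Minkowski inequality (cite that lemma rather than \cite{O1978,S2014}; the ``hard part'' you flag is precisely what the paper also outsources to a citation, so you are on equal footing there). After that the mechanisms diverge. The paper multiplies the centered bound $\bigl|h_K-\tfrac{V(K,L)}{V(L)}h_L\bigr|\le\tfrac{\delta}{V(L)}h_L$ by $(\rho-a)$, controls signs separately on $I_1$ and $I_2$ (where $\rho-a\ge 0$ and $\rho-a\le 0$ respectively), integrates, and then evaluates the full integral in closed form as $2\delta^2/V(L)^2$; comparing with the upper bound $2b\delta/V(L)$ gives $b\ge\delta/V(L)$, i.e.\ $\rho_1\ge -t_2$, in one stroke. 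You never compute that integral: instead you read the vanishing of the Steiner polynomial at $t_2$ as saying that $-t_2$ is the $w$-weighted average of $\rho$ with $w=-t_2-h_K/h_L$, then push that average up via the bathtub principle to the average of $\rho$ over a super-level set $I^*$ of $\mu$-measure $m=2\delta/w_{\max}\ge V(L)$, and finally use nestedness ($I^*\supseteq I_1$, with $\rho\le a\le\rho_1$ on the excess) to land on $\rho_1$. Your route is more conceptual---it explains \emph{why} the inequality holds: a weighted average whose weight is bounded by $w_{\max}\le 2\delta/V(L)$ and has total mass $2\delta$ cannot exceed the top-slice average $\rho_1$---at the cost of extra measure-theoretic bookkeeping (plateaus of $\rho$ and the choice of $I^*$ compatible with $I_1$, existence of $I^*$ inside $S^1$, which follows automatically from $2\delta=\int w\,\mathrm{d}\mu\le 2w_{\max}V(L)$, and the degenerate case $\delta=0$, where the averaging identity collapses but the claim is trivial since $\rho_1\ge\tfrac12(\rho_1+\rho_2)=V(K,L)/V(L)=-t_2$); all of these you flag honestly and all are routine. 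One simplification you missed: if you keep the weight multiplicative, i.e.\ work with the density $(-t_2h_L-h_K)(h_L+h_L'')\,\mathrm{d}\theta$ rather than the ratio $\sigma=h_K/h_L$, then the bounds $0\le w\,\mathrm{d}\mu\le w_{\max}\,\mathrm{d}\mu$ hold pointwise in every case, so the tangency case (origin on $\partial K\cap\partial L$), which the paper must treat separately via Lemma \ref{lem7.3}, requires no case distinction at all in your argument.
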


\begin{proof}
From \cite[Lemma 4.1]{B-L-Y-Z2012}
and the Minkowski inequality, it follows that
$$-t_1\le r(K,L)\le R(K,L)\le -t_2.$$
By Lemma \ref{lem7.3}, the origin $o\in \mathrm{int}(K\cap L)$ or $o$ is the point
of tangency of $\partial K$ and $\partial L$ such that $K\subseteq L$
(or $L\subseteq K$).

If the origin $o\in \mathrm{int}(K\cap L)$, then
$r(K,L)\le\frac{ h_K(\theta)}{h_L(\theta)}\le R(K,L)$,
which implies
\begin{equation*}
-\frac{\delta}{ V(L)}h_L(\theta)\le h_K(\theta)-\frac{V(K,L)}{V(L)}h_L(\theta) \le\frac{\delta}{ V(L)}h_L(\theta),\quad \delta=\sqrt{V(K,L)^2-V(K)V(L)}\ge 0.
\end{equation*}
On $I_1$, $\rho(\theta)-a\ge 0$, combining with the above inequality, it yields
\begin{equation*}
-\left( h_K(\theta)-\frac{V(K,L)}{V(L)}h_L(\theta)\right)(\rho(\theta)-a)\le \frac{\delta}{ V(L)}h_L(\theta)(\rho(\theta)-a).
\end{equation*}
By integrating this on the interval $I_1$, 
\begin{equation}\label{eqn7.3}
-\frac{1}{V(L)}\int_{I_1}\left( h_K(\theta)-\frac{V(K,L)}{V(L)}h_L(\theta)\right)(\rho(\theta)-a)(h_L(\theta)+h_L''(\theta))\mathrm{d}\theta
\le \frac{\delta}{V(L)}(\rho_1-a).
\end{equation}
Similarly, on $I_2$, we have
\begin{equation}\label{eqn7.4}
-\frac{1}{V(L)}\int_{I_2}\left( h_K(\theta)-\frac{V(K,L)}{V(L)}h_L(\theta)\right)(\rho(\theta)-a)(h_L(\theta)+h_L''(\theta))\mathrm{d}\theta
\le -\frac{\delta}{V(L)}(\rho_2-a).
\end{equation}
It can be seen from \eqref{eqn7.3} and \eqref{eqn7.4} that
\begin{equation*}
-\frac{1}{V(L)}\int_0^{2\pi}\left( h_K(\theta)-\frac{V(K,L)}{V(L)}h_L(\theta)\right)(\rho(\theta)-a)(h_L(\theta)+h_L''(\theta))\mathrm{d}\theta
\le \frac{2b \delta}{V(L)}
\end{equation*}
and its left-hand side can be simplified to $\frac{2\delta^2}{V(L)^2}$,
thus we have, $b\ge \frac{\delta}{V(L)}\ge 0$, that is, $\rho_1\ge -t_2$.

If the origin $o$ is the point of tangency of
$\partial K$ and $\partial L$ such that $L\subseteq K$
(the case of $K\subseteq L$ is similar),
then $r(K,L)\le\frac{ h_K(\theta)}{h_L(\theta)}\le R(K,L)$
for $\theta\in \tilde{I}$ ($\tilde{I}$ is a subset of $S^1$)
and $h_K(\theta)=h_L(\theta)=0$ for $\theta\in S^1\setminus \tilde{I}$.
A similar discussion implies that $\rho_1\ge -t_2$.
\end{proof}

\begin{lemma}\label{lem7.5}
Let $K, L$ be two smooth, planar strictly convex bodies.
If $K$ and $L$ are at a dilation position but not homothetic, then
\begin{equation}\label{eqn7.2}
    \rho_1> -t_2.
\end{equation}
\end{lemma}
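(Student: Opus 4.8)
The plan is to argue by contradiction, extracting from the proof of Lemma~\ref{lem7.4} a rigid equality discussion. Assume, contrary to \eqref{eqn7.2}, that $\rho_1=-t_2$. Because $K$ and $L$ are not homothetic, the Minkowski inequality $V(K,L)^2\ge V(K)V(L)$ is strict, so $\delta>0$; hence $-t_1<-t_2$ and $b=\delta/V(L)>0$. First I would retrace \eqref{eqn7.3} and \eqref{eqn7.4}: their sum gives $\frac{2\delta^2}{V(L)^2}\le\frac{2b\delta}{V(L)}$, and the hypothesis $\rho_1=-t_2$ means precisely $b=\delta/V(L)$, which turns this into an equality. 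Consequently both \eqref{eqn7.3} and \eqref{eqn7.4} hold with equality; as each arises by integrating a pointwise inequality against the positive measure $(h_L(\theta)+h_L''(\theta))\,\mathrm{d}\theta$ (positive since $L$ is smooth and strictly convex), the integrands must agree almost everywhere.

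Writing $A=\{\theta\in S^1\mid\rho(\theta)=a\}$ and using that the two-sided bound $-\frac{\delta}{V(L)}h_L\le h_K-\frac{V(K,L)}{V(L)}h_L\le\frac{\delta}{V(L)}h_L$ is now saturated, the equality conditions read
\begin{equation*}
    (h_K+t_1h_L)(\rho-a)=0\ \text{ a.e. on }I_1,\qquad
    (h_K+t_2h_L)(\rho-a)=0\ \text{ a.e. on }I_2 .
\end{equation*}
In particular, off $A$ one has $h_K=-t_1h_L$ on $I_1$ and $h_K=-t_2h_L$ on $I_2$.

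The decisive step is to upgrade these pointwise identities for $h_K$ into identities for $\rho=(h_K+h_K'')/(h_L+h_L'')$. Suppose $I_1\setminus A$ has positive measure and let $\theta_0\in I_1\setminus A$ be a density point of it. The function $g=h_K+t_1h_L$ is smooth, nonnegative (since $h_K\ge r(K,L)h_L\ge -t_1h_L$), and vanishes on $I_1\setminus A$; at the density point $\theta_0$ it therefore vanishes together with $g'$ and $g''$, whence $h_K+h_K''=-t_1(h_L+h_L'')$ and $\rho(\theta_0)=-t_1$. Since $\theta_0\in I_1$ forces $\rho(\theta_0)\ge a$ while $\theta_0\notin A$ forces $\rho(\theta_0)\ne a$, we obtain $-t_1>a$. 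Symmetrically, if $I_2\setminus A$ has positive measure then $-t_2<a$. As $-t_1\le-t_2$, these cannot both occur, leaving three possibilities: (i) $\rho=a$ a.e. on $S^1$; (ii) $-t_1>a$ with $\rho=a$ a.e. on $I_2$, so $\rho_2=a$, yet $b=\delta/V(L)$ gives $\rho_2=-t_1$, forcing $a=-t_1$ --- impossible; (iii) $-t_2<a$ with $\rho=a$ a.e. on $I_1$, so $\rho_1=a=-t_2$ --- impossible.

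Thus only case (i) can hold. There $\rho\equiv a$ (by continuity of $\rho$), so $h_K+h_K''=a(h_L+h_L'')$ and $f:=h_K-ah_L$ satisfies $f+f''=0$, giving $f(\theta)=\alpha\cos\theta+\beta\sin\theta$ for constants $\alpha,\beta$. Since $\alpha\cos\theta+\beta\sin\theta$ is the support function of the point $(\alpha,\beta)$, this means $K=aL+(\alpha,\beta)$, i.e.\ $K$ and $L$ are homothetic, contradicting the hypothesis. Hence the assumption $\rho_1=-t_2$ is untenable and $\rho_1>-t_2$. I expect the density-point argument of the third paragraph to be the main obstacle: it is the only place where smoothness and strict convexity are genuinely needed to pass from values of $h_K$ on a measurable set to the curvature radius $\rho$. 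Finally, the tangency configuration allowed by Lemma~\ref{lem7.3} ($K\subseteq L$ or $L\subseteq K$) is treated identically, since there $h_L>0$ off a null set and all integrals are unchanged.
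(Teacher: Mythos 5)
Your proposal is correct, but it takes a genuinely different route from the paper's. The paper argues \emph{directly}: since $K$ and $L$ are not homothetic, it invokes the strict form of \cite[Lemma 4.1]{B-L-Y-Z2012} to get $-t_1<r(K,L)\le h_K/h_L\le R(K,L)<-t_2$, observes that $\rho\equiv a$ cannot hold on both $I_1$ and $I_2$ (else $K$ and $L$ would be homothetic), and hence finds a subinterval, say $I_1'\subseteq I_1$, on which $\rho>a$; there the pointwise inequality behind \eqref{eqn7.3} is strict, so \eqref{eqn7.3} becomes strict, and adding \eqref{eqn7.4} gives $\frac{2\delta^2}{V(L)^2}<\frac{2b\delta}{V(L)}$, i.e.\ $b>\delta/V(L)$, which is exactly \eqref{eqn7.2}. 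You instead run a contradiction/rigidity argument: assuming $\rho_1=-t_2$ (the only alternative to \eqref{eqn7.2}, by Lemma \ref{lem7.4}), you force equality in both \eqref{eqn7.3} and \eqref{eqn7.4}, extract the a.e.\ pointwise equality conditions $(h_K+t_1h_L)(\rho-a)=0$ on $I_1$ and $(h_K+t_2h_L)(\rho-a)=0$ on $I_2$, and then use a density-point argument to upgrade the support-function identities $h_K=-t_ih_L$ to curvature-radius identities $\rho=-t_i$, finishing with the ODE $f+f''=0$ to recognize homothety. Both proofs rest on the same two pillars --- the dilation-position bound on $h_K/h_L$ and the fact that constant $\rho$ forces homothety --- but yours needs only the \emph{non-strict} bounds already used in Lemma \ref{lem7.4} (you never invoke the strict inequalities $-t_1<r<R<-t_2$ for non-homothetic bodies), at the price of the measure-theoretic step you rightly flag as the crux; that step does work: at a density point of the zero set of the smooth nonnegative function $g=h_K+t_1h_L$ one has $g=g'=0$, and $g''>0$ would make the zero isolated, contradicting density, so $g''=0$ as well. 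In sum, the paper's proof is shorter but leans on the strict form of the cited lemma and leaves the ``$\rho\equiv a$ on at most one interval'' claim implicit, while yours is longer and more delicate but self-contained at the level of the bounds and makes the equality analysis fully explicit.
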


\begin{proof}
Since $K$ and $L$ are not homothetic, by \cite[Lemma 4.1]{B-L-Y-Z2012}
and the fact that $K$ and $L$ are smooth and strictly convex,
$$-t_1< r(K,L)< R(K,L)< -t_2.$$
By Lemma \ref{lem7.3}, the origin $o\in \mathrm{int}(K\cap L)$ or $o$ is the point
of tangency of $\partial K$ and $\partial L$ such that $K\subseteq L$
(or $L\subseteq K$).

If the origin $o\in \mathrm{int}(K\cap L)$, then
\begin{equation*}
-\frac{\delta}{ V(L)}h_L(\theta)< h_K(\theta)-\frac{V(K,L)}{V(L)}h_L(\theta) <\frac{\delta}{ V(L)}h_L(\theta),
\quad \delta=\sqrt{V(K,L)^2-V(K)V(L)}> 0.
\end{equation*}
For $I_1$ and $I_2$,
$\rho(\theta)\equiv a$ holds on at most one interval, unless $K$ and $L$ are homothetic.
Without loss of generality, assume that $\rho(\theta)> a$ on a subinterval $I_1'$
of $I_1$. On $I_1'$, $\rho(\theta)> a$ and
\begin{equation*}
-\left( h_K(\theta)-\frac{V(K,L)}{V(L)}h_L(\theta)\right)(\rho(\theta)-a)< \frac{\delta}{ V(L)}h_L(\theta)(\rho(\theta)-a).
\end{equation*}
Integrating this expression over the interval $I_1$ yields
\begin{equation*}
-\frac{1}{V(L)}\int_{I_1}\left( h_K(\theta)-\frac{V(K,L)}{V(L)}h_L(\theta)\right)(\rho(\theta)-a)(h_L(\theta)+h_L''(\theta))\mathrm{d}\theta
< \frac{\delta}{V(L)}(\rho_1-a),
\end{equation*}
which, together with \eqref{eqn7.4}, gives
\begin{equation*}
-\frac{1}{V(L)}\int_0^{2\pi}\left( h_K(\theta)-\frac{V(K,L)}{V(L)}h_L(\theta)\right)(\rho(\theta)-a)(h_L(\theta)+h_L''(\theta))\mathrm{d}\theta
<\frac{2b \delta}{V(L)}.
\end{equation*}
By a similar argument as in Lemma \ref{lem7.4}, $b > \frac{\delta}{V(L)} > 0$, which implies that $\rho_1>-t_2$.

If the origin $o$ is the point of tangency of
$\partial K$ and $\partial L$ such that $L\subseteq K$
(the case of $K\subseteq L$ is similar),
then
\begin{equation*}
-\frac{\delta}{ V(L)}h_L(\theta)< h_K(\theta)-\frac{V(K,L)}{V(L)}h_L(\theta) <\frac{\delta}{ V(L)}h_L(\theta)
\end{equation*}
for $\theta\in \tilde{I}$.
Similar with the case that the origin $o\in \mathrm{int}(K\cap L)$,
one can get $\rho_1> -t_2$.
\end{proof}

Now, we give the proof of Theorem \ref{thm7.5}.

{\noindent \bf Proof of Theorem \ref{thm7.5}}
By Jensen's inequality on $I_i$, $i=1,2$, one has
\begin{equation*}
\frac{1}{V(L)}\int_{I_i} F(\rho(\theta))h_L(\theta)(h_L(\theta)+h_L''(\theta))\mathrm{d}\theta\ge F(\rho_i).
\end{equation*}
Then
\begin{equation}\label{eqn7.6}
\frac{1}{V(L)}\int_0^{2\pi} F(\rho(\theta))h_L(\theta)(h_L(\theta)+h_L''(\theta))\mathrm{d}\theta\ge F(\rho_1)+F(\rho_2),
\end{equation}
where $\rho_1=\frac{V(K,L)}{V(L)}+b$, $\rho_2=\frac{V(K,L)}{V(L)}-b$ and $b\ge 0$. Again from \eqref{eqn7.1},
it follows that $b\ge \frac{\delta}{V(L)}\ge 0$ and $\delta=\sqrt{V(K,L)^2-V(K)V(L)}$.
Since function $F(x)$ is strict convexity,
\begin{align}
F(\rho_1)+F(\rho_2)&=F\left(\frac{V(K,L)}{V(L)}+b\right)+F\left(\frac{V(K,L)}{V(L)}-b\right)\nonumber\\
&\ge F\left(\frac{V(K,L)}{ V(L)}+\frac{\delta}{V(L)}\right)
+F\left(\frac{V(K,L)}{V(L)}-\frac{\delta}{ V(L)}\right)
=F(-t_1)+F(-t_2),\label{eqn7.7}
\end{align}
which together with \eqref{eqn7.6} yields inequality \eqref{eqn7.5}.

On one hand, if $K$ and $L$ are homothetic, then $-t_1=-t_2=\rho(\theta)$,
it is clear that the equality holds in \eqref{eqn7.5}. On the other hand,
in order to prove that $K$ and $L$ are homothetic
when the equality holds in \eqref{eqn7.5}, it is enough to show that
inequality \eqref{eqn7.5} is strict
when $K$ and $L$ are not homothetic.
If $K$ and $L$ are not homothetic, then $\delta=\sqrt{V(K,L)^2-V(K) V(L)}>0$,
and by \eqref{eqn7.2}, one has $b>\frac{\delta}{V(L)}>0$. It follows from
the strict convexity of function $F(x)$ that \eqref{eqn7.7} is strict,
which together with \eqref{eqn7.6} implies that \eqref{eqn7.5} is strict.\qed

\begin{remark}
If $\mathbb{R}^2$ is equipped with a suitable Minkowski metric such that
$\partial L$ becomes the isoperimetrix of the Minkowski plane, then
\eqref{eqn7.5} turns into an inequality in Minkowski geometry
(see \cite[Remark 3.6]{Y-Z2016}).
\end{remark}

\section*{Acknowledgements}
I am grateful to the anonymous referee for his or her careful
reading of the original manuscript of this paper and giving us many
invaluable comments. I would also like to thank
Professor Shengliang Pan for posing this problem to me.

\end{document}